\documentclass{article}
\usepackage{latexsym}
\usepackage{epsfig}
\usepackage{amsmath}
\usepackage{amssymb}
\usepackage{amsthm}
\usepackage{caption}
\newtheorem{theorem}{Theorem}[section]

\newtheorem{lemma}[theorem]{Lemma}

\theoremstyle{remark}

\theoremstyle{assumption}
\newtheorem{assumption}[theorem]{Assumption}

\theoremstyle{definition}

\begin{document}

\title{Sufficient conditions for the existence of exponential-polynomial expansions for solutions of certain differential equations}

\author{Roland Hildebrand  (khildebrand.r@mipt.ru) \\ Rahaf Habib (rahoufhabib@gmail.com) \\ MIPT, FPMI, 141701 Dolgoprudny}

\maketitle

\begin{abstract}
We consider ordinary differential equations (ODE) of the form $u''u - (u')^2 = e^{-x}P(u) - 1$, where $P$ is a polynomial. In previous work, necessary conditions on $P$ have been established for certain families of solutions of these ODEs to have asymptotic expansions of the form $u(x) = \sum_{k=0}^{\infty} p_k(x+c)e^{-kx}$ for $Re\,x \to +\infty$, where $c \in \mathbb C$ is an arbitrary constant parameterizing the solution family, and $p_k$ are polynomials, with $p_0(x) = x$. These conditions amount to $P(0) = 0$ and $P'(0) = \frac12P''(0)$. Here we show that these two conditions are also sufficient. The results imply the existence of corresponding expansions for certain degenerate Painlev\'e III transcendents. 
\end{abstract}

\emph{Keywords}: asymptotic expansion, Painlev\'e transcendent, trans-series, exponential-polynomial.

MSC2020: 34E05

\section{Introduction}

In previous work \cite[Section 2]{NecessaryConditions} we have shown that certain degenerate Painlev\'e III equations are equivalent to members of a class of ordinary differential equations (ODEs). The ODEs in the class have the form
\begin{equation} \label{generalODE}
    u''u - (u')^2 = e^{-x}P(u) - 1,
\end{equation}
where $P$ is a polynomial and serves as parameter of the class. We are interested in solutions of ODE \eqref{generalODE} with asymptotics
\begin{equation} \label{roughAsymptotics}
    u(x) \sim x + c, \qquad x \to +\infty,
\end{equation}
where $c \in \mathbb C$ is an arbitrary constant. Let us denote the solution of ODE \eqref{generalODE} with asymptotics \eqref{roughAsymptotics} by $u(x;c,P)$.

Let us remark that for $P = 0$ equation \eqref{generalODE} can be solved explicitly, and the generic solution is given by
\[ u(x) = \frac{e^{k(x+c)} - e^{-k(x+c)}}{2k},
\]
where $k \in \mathbb C \setminus \{0\}$, $c \in \mathbb C$ are parameters. In the limit $k \to 0$ we obtain the special solutions
\begin{equation} \label{P0solution}
u(x;c,0) = x + c.
\end{equation}

In this work we do not focus on the existence of the solutions $u(x;c,P)$, but rather on the existence of formal asymptotic series which fulfill the relation \eqref{generalODE} and have leading term \eqref{roughAsymptotics}. For certain polynomials $P$ equation \eqref{generalODE} is equivalent to a degenerate Painlev\'e III equation, and in these cases the existence of the solutions $u(x;c,P)$ is guaranteed by the properties of the corresponding Painlev\'e transcendents.

Previously we have derived necessary conditions on the polynomial $P$ in order for a formal asymptotic series of the form
\begin{equation} \label{fullAsymptotics}
    u(x;c,P) \sim \sum_{k=0}^{\infty} p_k(x+c;P)e^{-kx}
\end{equation}
to fulfill \eqref{generalODE}, where $p_k(x;P)$ are polynomials in $x$ depending on $P$ but not on $c$, and $p_0(x;P) = x$. Namely, we have the following result \cite[Theorem 5.2]{NecessaryConditions}.

\begin{theorem} \label{thm:mainNecessary}
    Let $c \in \mathbb C$ be arbitrary and let $u(x)$ be a solution of ODE \eqref{generalODE} admitting an asymptotic expansion of the form \eqref{fullAsymptotics}, where $p_0(x) = x$, and $P(t) = \sum_{j=0}^{d} \pi_jt^j$ is a polynomial with $\pi_d \not= 0$. Then $\pi_0 = 0$ and $\pi_1 = \pi_2$. In particular, $d \geq 2$. In this case the degrees of the polynomials $p_k$ are given by $\deg\,p_k = d_k = (d-2)k + 1$.
\end{theorem}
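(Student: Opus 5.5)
Substitute the series \eqref{fullAsymptotics} into \eqref{generalODE} and compare coefficients of $e^{-kx}$. Since $c$ only shifts the argument, I first write $u = \sum_k p_k(y)e^{-kx}$ with $y = x+c$. Then $e^{-x} = e^{c}e^{-y}$, and $\frac{d}{dx}\bigl(p_k(y)e^{-kx}\bigr) = (p_k' - kp_k)e^{-kx}$. The factor $e^{c}$ can be absorbed by a uniform rescaling of the $p_k$ for $k\ge1$. Because the $p_k$ are required not to depend on $c$, the coefficient identities must hold as polynomial identities in $y$.

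At order $e^{0}$ we get $p_0''p_0 - (p_0')^2 = -1$, which holds for $p_0 = y$. At order $e^{-x}$, write $L_k[p] := p_0 p'' - 2k p_0 p' + k^2 p_0 p - 2p_0'(p' - kp) + p_0''p$ for the part of $u''u-(u')^2$ that is linear in $p_k$. The order-$k$ equation then reads $L_k[p_k] = R_k$, where $R_k$ collects the products $p_ip_j$ with $i+j=k$, $1\le i,j\le k-1$, together with the coefficient of $e^{-(k-1)x}$ in $P(u)$. For $k=1$ the equation is $y p_1'' - 2yp_1' + yp_1 - 2p_1' + 2p_1 = P(y)$. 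If $\deg p_1 = m$, comparing top degrees with $d$ and evaluating at $y=0$ will give the constraints. More systematically, I would do the following.

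\textbf{Step 1 (order $e^{-x}$).} Setting $y=0$ gives $-2p_1'(0)+2p_1(0) = \pi_0$. The operator $L_1$ acts on monomials as $y^m \mapsto y^{m+1} + (2-2m)y^m + (\text{lower})$. Thus $L_1$ raises degree by exactly one, and $\deg p_1 = d-1$. Comparing coefficients degree by degree determines $p_1$ uniquely from the top coefficient downward. The remaining constraint is the constant term. From $L_1[y^m]$, the constant term is $-2m\,\delta_{m,1}+2\delta_{m,0}$, so the constant term of $L_1[p_1]$ is $2(a_0 - a_1)$. Since the equation is triangular from the top, $a_0$ and $a_1$ are already fixed by the higher coefficients, so this is a genuine condition linking the $\pi_j$. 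I expect it to force $\pi_0 = 0$ only in combination with the next order, because $p_1$ is not yet determined. (Alternatively, $\pi_0\neq 0$ might already be impossible here.)

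\textbf{Step 2 (order $e^{-2x}$).} Here $L_2[p_2] = -\bigl(p_1''p_1 - (p_1')^2 + \text{cross terms}\bigr) + P'(y)p_1$. Again $L_2$ is triangular in degree. Unique solvability fails exactly where the lowest coefficients cannot be matched, which gives a further scalar consistency condition. Together, these two scalar conditions should reduce to $\pi_0=0$ and $\pi_1=\pi_2$. The degree count $\deg P'p_1 = 2d-3 = \deg p_2 + 1$ then yields $d_2 = 2d-3$. Induction on $k$ gives $d_k = (d-2)k+1$, since the dominant term of $R_k$ is $\pi_d d\,y^{d-1}p_{k-1}$.

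The main obstacle is correctly identifying the obstruction in the kernel/cokernel of $L_k$ on polynomials. The candidate obstruction is the small-degree behaviour at $y=0$, where $p_0=y$ vanishes, and extracting the two explicit scalar conditions from it requires computing $p_1$ in closed form. For that I would use the integrating structure $L_1[p] = e^{y}\bigl(y (e^{-y}p)'' + 2(e^{-y}p)' \cdots\bigr)$, i.e. substitute $p = e^{y}q$ to reduce to a first-order equation for $(yq)'$.
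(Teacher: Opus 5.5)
\textbf{The main gap: orders $1$ and $2$ do not suffice.} Your framework matches the cited proof: you match coefficients of $e^{-kx}$, and since $L_k$ is injective and raises degree by exactly one, each order carries a single scalar obstruction. The decisive step fails, though. The obstructions at orders $1$ and $2$ do \emph{not} reduce to $\pi_0=0$, $\pi_1=\pi_2$, and you never actually compute them.

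Your integrating-factor idea does give the criterion. With $f=pe^{-ky}$ one has $e^{-ky}L_k[p]=yf''-2f'$ and $(yf''-2f')''=yf''''$. Hence $L_k[p]=q$ is solvable iff $(qe^{-ky})''|_{y=0}=0$, i.e.\ $2c_2-2kc_1+k^2c_0=0$ (Lemma~\ref{lem:summandCondition}). At order $1$ this gives $\pi_0-2\pi_1+2\pi_2=0$. At order $2$ one has $R_2=P'p_1-p_1p_1''+(p_1')^2$, since the other cross terms cancel, and the criterion gives $\pi_0(\pi_0-2\pi_2)=0$.

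This pair of conditions has the extra branch $\pi_0=\pi_1=2\pi_2\neq0$. For instance, $P(y)=y^2+2y+2$ yields the polynomials $p_1=y+2$ and $p_2=(y+3)/2$. The contradiction appears only at order $3$: there $R_3=\frac32y^2+\frac{11}{2}y+4$, and $2c_2-6c_1+9c_0=6\neq0$. So the argument must go to order $3$, which is exactly why the paper describes the conditions as necessary and sufficient for $p_1,p_2,p_3$ to exist.

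A clean way to get all obstructions at once is to rerun the proof of Lemma~\ref{Phiss0} without assuming $\pi_0=0$, $\pi_1=\pi_2$. Eliminating $U_{s-1}'$ and $U_{s-1}''$ at the zero $x_0(a)$ shows the following: if orders $1,\dots,s-1$ are solvable, the order-$s$ obstruction is the coefficient of $b^s$, with $b=ae^{-x_0(a)}$, in
\[ F(b)=b(\pi_0+2\pi_2)-2b^2\pi_0\pi_2-2b\pi_1\sqrt{1-b\pi_0}-\frac{b^2\pi_0\pi_1}{\sqrt{1-b\pi_0}}+\frac{b^2\pi_0^2}{1-b\pi_0}. \]
Its first three coefficients are $\pi_0-2\pi_1+2\pi_2$, $\pi_0(\pi_0-2\pi_2)$ and $\pi_0^2(\pi_0-\frac14\pi_1)$. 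On the spurious branch the third equals $\frac34\pi_0^3\neq0$.

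\textbf{The degree count is also incomplete.} At $k=2$ it works, though the correct degree is $\deg(P'p_1)=2d-2$ (not $2d-3$), with $\deg(p_1p_1''-(p_1')^2)\le 2d-4$. For $k\ge3$, however, $d\pi_dy^{d-1}p_{k-1}$ does not dominate $R_k$. Every term $\frac{1}{j!}P^{(j)}(y)p_{i_1}\cdots p_{i_j}$ with $i_1+\dots+i_j=k-1$ has degree $(d-2)k+2$. So does every product $p_i(p_j''-2jp_j'+j^2p_j)$ and $(p_i'-ip_i)(p_j'-jp_j)$ with $i+j=k$, $i,j\ge1$. You therefore have to show that their combined top coefficient never vanishes.

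One way is as follows. Let $\lambda_k$ be the coefficient of $y^{(d-2)k+1}$ in $p_k$, and set $g(z)=\sum_{k\ge0}\lambda_kz^k$ with $\lambda_0=1$. The top-degree parts of the equations combine into $g\,\theta^2g-(\theta g)^2=\pi_dzg^d$, where $\theta=z\frac{d}{dz}$. Its unique formal solution with $g(0)=1$ is $g=e^{\pi_2z}$ for $d=2$ and $g=(1-\frac{d-2}{2}\pi_dz)^{-2/(d-2)}$ for $d>2$. All Taylor coefficients of these are nonzero, which gives exactly $\deg p_k=(d-2)k+1$.
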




In this work we show that the conditions $\pi_0 = 0$ and $\pi_1 = \pi_2$ on the coefficients $\pi_j$ are also sufficient for the existence of a formal asymptotic series of the form \eqref{fullAsymptotics} (Theorem \ref{thm:mainSufficiency}). This implies that for all polynomials $P$ in a linear subspace of co-dimension 2 of the space of all polynomials, the solutions of ODE \eqref{generalODE} with asymptotics \eqref{roughAsymptotics}, if they exist, have an asymptotic expansion of the form \eqref{fullAsymptotics}. In particular, this holds for those ODEs \eqref{generalODE} which are equivalent to degenerate Painlev\'e III equations. This implies the existence of corresponding expansions for the concerned Painlev\'e transcendents.

The remainder of the paper is structured as follows. In Section \ref{sec:main} we prove the main result Theorem \ref{thm:mainSufficiency} on the sufficiency of conditions on $P$ for expansions of the form \eqref{fullAsymptotics} to exist. In Section \ref{sec:zeroBehaviour} we provide an interpretation of these conditions by studying the expansion of solutions $u(x)$ of ODE \eqref{generalODE} around their zeros. In Section \ref{sec:Painleve} we provide explicit expansions for the Painlev\'e transcendents which can be obtained by transforming solutions of \eqref{generalODE}. In Section \ref{sec:outlook} we give an outlook on further research directions.

\section{Solvability conditions} \label{sec:main}

The polynomials $p_k$ in expansion \eqref{fullAsymptotics} have to be computed one by one, by inserting the expansion into \eqref{generalODE} and matching the terms at $e^{-kx}$. At each step the linear system on the coefficients of $p_k$ is overdetermined, and the polynomial $P(t) = \sum_{k=0}^d \pi_kt^k$ has to fulfill certain constraints in order for this system to have a solution.

In \cite[Section 5]{NecessaryConditions} we have shown that the conditions $\pi_0 = 0$, $\pi_1 = \pi_2$ on the coefficients of $P$ are necessary and sufficient for polynomial solutions $p_1,p_2,p_3$ of the corresponding systems to exist. Here we  shall show by induction over $k$ that then also all other $p_k$ can be found. Let us make the following assumption, corresponding to the induction hypothesis.

\begin{assumption} \label{InductionHypothesis}
    Let $\deg\,P = d \geq 2$, $\pi_0 = 0$, $\pi_1 = \pi_2$, and $s \geq 1$. Assume that polynomials $p_k$ for $k \leq s-1$ have been determined such that insertion of the partial sum
    \[ U_{s-1}(x) = \sum_{k=0}^{s-1} p_k(x)e^{-kx}
    \]
    into \eqref{generalODE} in place of $u(x)$ yields a residual
    \begin{equation} \label{Dsx}
        D_s(x) := e^{-x}P(U_{s-1}) - 1 - U_{s-1}''U_{s-1} + (U_{s-1}')^2 = \sum_{k=s}^{d \cdot (s-1) + 1} q_{k,s}(x)e^{-kx}
    \end{equation} 
    with leading exponential term not larger than $e^{-sx}$. Here $q_{k,s}$ are also polynomials.
\end{assumption}

The maximal exponent $e^{-(d(s-1)+1)x}$ comes from the term $e^{-x}P(U_{s-1})$ in $D_s$, since the other terms give rise to exponents at most $e^{-2(s-1)}$.

\begin{lemma} \label{lem:summandCondition}
    The linear system on the next polynomial $p_s$ has a solution if and only if $(q_{s,s}e^{-sx})''|_{x = 0} = 0$.
\end{lemma}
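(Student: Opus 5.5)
The plan is to write the next partial sum as $U_s = U_{s-1} + f$ with $f(x) = p_s(x)e^{-sx}$, insert it into \eqref{generalODE}, and isolate the coefficient of $e^{-sx}$ in the new residual $D_{s+1}$. The coefficient of $e^{-sx}$ is the only one that the choice of $p_s$ must kill; the other exponents are handled inductively. Since $p_0(x) = x$, we have $U_{s-1} = x + O(e^{-x})$, so $U_{s-1}'' = O(e^{-x})$ and $U_{s-1}' = 1 + O(e^{-x})$. The contributions of $f$ at order $e^{-sx}$ are as follows.
\begin{itemize}
\item From $-U_s''U_s$ we get $-xf''$; the term $-U_{s-1}''f$ is of order $e^{-(s+1)x}$.
\item From $(U_s')^2$ we get $2f'$.
\item The term $e^{-x}P(U_s)$ only sees $f$ at order $e^{-(s+1)x}$ or higher.
\item Quadratic terms in $f$ are of order $e^{-2sx}$.
\end{itemize}
Hence, with $g(x) := q_{s,s}(x)e^{-sx}$, the linear system on $p_s$ is exactly the requirement
\[ x f'' - 2 f' = g \]
for $f$ of the form (polynomial)$\cdot e^{-sx}$.

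To analyse this system I will substitute $h = f'$. The map $p \mapsto p' - sp$ is a degree-preserving bijection on polynomials, since $s \geq 1$ and it is triangular with diagonal $-s$. Therefore $h = r e^{-sx}$ with an arbitrary polynomial $r$, and $f$ is recovered uniquely from $h$. The equation becomes $L h := x h' - 2h = g$, or in terms of polynomials, $x(r' - sr) - 2r = q_{s,s}$.

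\emph{Necessity.} Differentiating $xh' - 2h = g$ twice gives $x h''' = g''$. Evaluating at $x = 0$ yields $g''(0) = 0$, which is the stated condition $(q_{s,s}e^{-sx})''|_{x=0} = 0$.

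\emph{Sufficiency.} I will use a dimension count on the polynomial parts. Let $V_n$ denote the space of functions $r e^{-sx}$ with $\deg r \leq n$.
\begin{itemize}
\item $L$ maps $V_{n-1}$ into $V_n$, because the factor $-sx$ raises the degree by one.
\item $L$ is injective: the kernel of $xh' - 2h = 0$ consists of the functions $cx^2$, and $cx^2$ is not of the form $re^{-sx}$ unless $c = 0$.
\item The image of $L$ lies in the hyperplane $\{g \in V_n : g''(0) = 0\}$. This is a genuine hyperplane (i.e. the functional $g \mapsto g''(0)$ is nonzero on $V_n$), as one checks on $e^{-sx}$, whose second derivative at $0$ is $s^2 \neq 0$.
\end{itemize}
Comparing dimensions, $\dim V_{n-1} = n$ equals the dimension of this hyperplane in $V_n$, so $L$ maps $V_{n-1}$ onto it. Thus whenever $g''(0) = 0$ a unique $h$ exists, and consequently a unique $p_s$.

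As a cross-check, I can also solve coefficient-wise. Writing $r = \sum_j r_jx^j$ and $q_{s,s} = \sum_j q_jx^j$, the coefficient equations are $(j-2)r_j - s r_{j-1} = q_j$ for $j \geq 1$ and $-2r_0 = q_0$. Solving downward from the top degree determines all $r_j$. The degenerate $j = 2$ equation, $-s r_1 = q_2$, simply fixes $r_1$, and the single leftover equation at $j = 0$ is the solvability condition.

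I expect the main obstacle to be the bookkeeping in the first step: confirming that no other terms of order $e^{-sx}$ involving $p_s$ appear. In particular, one must verify that the cross terms $p_kp_s$ with $k \geq 1$, and the contributions of $p_s$ through $e^{-x}P(U_s)$, really start at $e^{-(s+1)x}$. Once this is done, the new residual $D_{s+1}$ begins at $e^{-(s+1)x}$, so the structure required by Assumption \ref{InductionHypothesis} is reproduced for $s+1$.
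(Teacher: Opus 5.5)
Your proposal is correct. The derivation of the linear system $xf''-2f'=q_{s,s}e^{-sx}$, equivalently $(p_s''-2sp_s'+s^2p_s)x-2(p_s'-sp_s)=q_{s,s}$, is the same bookkeeping the paper does. For the solvability criterion, however, you take a different route.

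The paper invokes Lemma 5.1 of its earlier work. That lemma gives the coefficient condition $2c_{2;s,s}-2sc_{1;s,s}+s^2c_{0;s,s}=0$, and the paper then notes that this quantity equals $(q_{s,s}e^{-sx})''|_{x=0}$.

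You instead prove the criterion from scratch. The substitution $h=f'$ is legitimate because $p\mapsto p'-sp$ is invertible on polynomials for $s\ge 1$, and it turns the operator into $xh'-2h$. Differentiating twice gives $xh'''=g''$, which yields necessity. Injectivity (the kernel $cx^2$ is not of the form $re^{-sx}$) together with $\dim V_{n-1}=n=\dim\{g\in V_n: g''(0)=0\}$ yields sufficiency.

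What your version buys is self-containment and an explanation of why the condition is a second-derivative condition at $0$: it comes from the kernel $x^2$, i.e. the indicial root $2$ of $xh'-2h$. This is exactly the form exploited in the next two lemmas. As a by-product you also get uniqueness of $p_s$ and $\deg p_s=\deg q_{s,s}-1$ whenever $q_{s,s}\neq 0$. The paper's version is shorter but rests on the external computation.
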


\begin{proof}
    Let us deduce the linear system on the next polynomial $p_s$ in expansion \eqref{fullAsymptotics}. Note that $U_{s-1}$ and hence also $D_s$ are entire functions. Set 
\[ U_s(x) = U_{s-1}(x) + p_s(x)e^{-sx}
\]
and 
\begin{align*}
    D_{s+1} =\,& e^{-x}P(U_s) - 1 - U_s''U_s + (U_s')^2 
  = e^{-x}P(U_{s-1} + p_se^{-sx}) - 1 - \\ &- (U_{s-1}'' + (p_s'' - 2sp_s' + s^2p_s)e^{-sx})(U_{s-1} + p_se^{-sx}) + (U_{s-1}' + (p_s' - sp_s)e^{-sx})^2 \\
  =\,& D_s(x) + e^{-x}\left(P(U_{s-1} + p_se^{-sx}) - P(U_{s-1})\right) - U_{s-1}''p_se^{-sx} - (p_s'' - 2sp_s' + s^2p_s)e^{-sx}U_{s-1} \\ &+ 2U_{s-1}'(p_s' - sp_s)e^{-sx} + O(e^{-(s+1)x}).
\end{align*} 
Neglecting terms of order $e^{-(s+1)x}$ and higher, we can replace $e^{-x}P(U_{s-1} + p_se^{-sx})$ by $e^{-x}P(U_{s-1})$, $e^{-sx}U_{s-1}$ by $e^{-sx}x$, $e^{-sx}U'_{s-1}$ by $e^{-sx}$, $e^{-sx}U''_{s-1}$ by 0, and $D_s$ by $q_{s,s}e^{-sx}$. Then we obtain
\[ D_{s+1}(x) = -\left( (p_s'' - 2sp_s' + s^2p_s)x - 2(p_s' - sp_s) \right)e^{-sx} + q_{s,s}e^{-sx} + O(e^{-(s+1)x}).
\]
The polynomial $p_s$ has to be chosen such that the term at $e^{-sx}$ in $D_{s+1}$ vanishes. This leads to the linear system
\[ (p_s'' - 2sp_s' + s^2p_s)x - 2(p_s' - sp_s) = q_{s,s}
\]
on the coefficients of $p_s$. 

The linear operator of this overdetermined system has been studied in \cite[Lemma 5.1]{NecessaryConditions}. The system is solvable with respect to $p_s$ if and only if the polynomial $q_{s,s}(x) = \sum_{j=0}^{\deg\,q_{s,s}} c_{j;s,s}x^j$ fulfills a certain linear condition on its coefficients. Namely, the last three coefficients of $q_{s,s}$ have to fulfill the relation 
\begin{equation*}
    2c_{2;s,s}-2sc_{1;s,s}+s^2c_{0;s,s} = 0. 
\end{equation*}
This condition is easily seen to be equivalent to the condition in the lemma.
\end{proof}

The result can be stated as follows. The first summand, corresponding to $k = s$, in the second derivative $D_s''$ has to vanish at $x = 0$. However, in the neighbourhood of $x = 0$ the exponents $e^{-kx}$ are all of the same order $O(1)$, and hence indistinguishable by their order of smallness. The first summand therefore does not take on the role of a leading term. To circumvent this, we borrow a technique from \cite{Kitaev25} and introduce a second independent variable.

Introduce a parameter $a \in \mathbb C$ and consider the family of partial sums
\[ U_{s-1}(x;a) = \sum_{k=0}^{s-1} a^kp_k(x)e^{-kx},
\]
as well as the corresponding residual
\begin{equation} \label{Dsxa}
    D_s(x;a) := e^{-x}aP(U_{s-1}(x;a)) - 1 - U_{s-1}''(x;a)U_{s-1}(x;a) + (U_{s-1}'(x;a))^2.
\end{equation} 
Here the prime denotes derivation with respect to $x$.

\begin{lemma} \label{lem:DsCondition}
    The condition in Lemma \ref{lem:summandCondition} is fulfilled if and only if 
    \[ D_s''(x;a) = O(a^{s+1}) + O(xa^s)
    \]
    in the neighbourhood of $(x,a) = (0,0)$. 
\end{lemma}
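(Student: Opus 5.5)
The plan is to track how the parameter $a$ enters $D_s(x;a)$. Each summand of $D_s(x;a)$ should carry the factor $a^k$ exactly when it carries $e^{-kx}$, so that
\[ D_s(x;a) = \sum_{k=s}^{d(s-1)+1} a^k q_{k,s}(x)e^{-kx}, \]
with the same polynomials $q_{k,s}$ as in \eqref{Dsx}. To see this, observe that $U_{s-1}(x;a)$ is obtained from $U_{s-1}(x)$ by replacing $e^{-x}$ with $ae^{-x}$, while treating the polynomial factors $p_k(x)$ as independent of this exponential. Differentiation in $x$ does not mix powers of $e^{-x}$ with powers of $a$: $(a^kp_ke^{-kx})' = a^k(p_k'-kp_k)e^{-kx}$. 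The term $e^{-x}aP(\cdot)$ carries the matched factor $ae^{-x}$ for the same reason. Hence every monomial in $a$ and $e^{-x}$ appearing in \eqref{Dsxa} has the form $a^ke^{-kx}$ times a polynomial in $x$. The coefficient of $a^ke^{-kx}$ is obtained by the same algebra that produced $q_{k,s}$ in \eqref{Dsx}, so it equals $q_{k,s}$. I would state this formally as a bookkeeping claim: in the ring $\mathbb C[x][[E]]$ with $E=e^{-x}$, $\frac{d}{dx}$ acts as $\partial_x - E\partial_E$, and this action commutes with the substitution $E\mapsto aE$.

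Next I differentiate twice in $x$:
\[ D_s''(x;a) = \sum_{k\ge s} a^k (q_{k,s}e^{-kx})''. \]
Each $(q_{k,s}e^{-kx})''$ is an entire function of $x$. For $k\ge s+1$ the terms are $O(a^{s+1})$ uniformly near $x=0$, since the sum is finite. Only the $k=s$ term remains to examine. I Taylor expand $(q_{s,s}e^{-sx})'' = (q_{s,s}e^{-sx})''|_{x=0} + O(x)$, which gives
\[ D_s''(x;a) = a^s\,(q_{s,s}e^{-sx})''|_{x=0} + O(xa^s) + O(a^{s+1}). \]

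For sufficiency: if the condition of Lemma \ref{lem:summandCondition} holds, the constant term vanishes and the claimed estimate follows. For necessity: if $D_s'' = O(a^{s+1})+O(xa^s)$, set $x=0$. This gives $a^s\,(q_{s,s}e^{-sx})''|_{x=0} = O(a^{s+1})$ as $a\to0$, which forces the constant to be zero. Here the $O$-terms are interpreted as $|\cdot|\le C(|a|^{s+1}+|x||a|^s)$ near the origin.

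The only real obstacle is the first step, namely justifying that the $a$-deformed residual has exactly the coefficients $q_{k,s}$. In particular, the factor $a$ placed in front of $P$ in \eqref{Dsxa} is precisely what keeps the homogeneity $a^k\leftrightarrow e^{-kx}$ intact. Everything after that is a Taylor expansion argument.
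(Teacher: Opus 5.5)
Your proposal is correct and follows the paper's proof: you establish $D_s(x;a)=\sum_k a^k q_{k,s}(x)e^{-kx}$ via the substitution $e^{-x}\mapsto ae^{-x}$, differentiate twice, and reduce the estimate to the vanishing of $(q_{s,s}e^{-sx})''|_{x=0}$, i.e.\ of the $x^0a^s$ Taylor coefficient of $D_s''$. Your bookkeeping with $\frac{d}{dx}=\partial_x-E\partial_E$ commuting with $E\mapsto aE$, and your explicit necessity argument obtained by setting $x=0$, make precise what the paper states more briefly.
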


\begin{proof}
By definition $U_{s-1}(x;a)$, and hence also $D_s(x;a)$, are entire functions in the variables $(x,a) \in \mathbb C^2$. In particular, $D_s''(x;a)$ can be developed in a globally converging Taylor series around $(x,a) = (0,0)$.

We have
\begin{equation} \label{Dsxaq}
    D_s(x;a) = \sum_{k=s}^{d \cdot (s-1) + 1} a^kq_{k,s}(x)e^{-kx}.
\end{equation} 
Indeed, expression $U_{s-1}(x;a) = \sum_{k=0}^{s-1} p_k(x)(ae^{-x})^k$ differs from expression $U_{s-1}(x) = \sum_{k=0}^{s-1} p_k(x)e^{-kx}$ by replacement of $e^{-x}$ by $ae^{-x}$. The same relation holds when comparing the derivatives of $U_{s-1}(x)$ and $U_{s-1}(x;a)$. In the same way expressions \eqref{Dsxa} and \eqref{Dsx} differ from each other. Equality \eqref{Dsxaq} then follows from the definition of the polynomials $q_{k,s}$ in \eqref{Dsx}. This implies
\begin{equation} \label{Dsxaq2}
    D_s''(x;a) = \sum_{k=s}^{d \cdot (s-1) + 1} a^k(q_{k,s}(x)e^{-kx})'' =: \sum_{k=s}^{d \cdot (s-1) + 1} a^k \cdot \Phi_{k,s}(x),
\end{equation} 
where $\Phi_{k,s}(x) = (q_{k,s}(x)e^{-kx})''$ are entire functions.

The condition in Lemma \ref{lem:summandCondition} then states that $\Phi_{s,s}(0) = 0$. Equivalently, the coefficient at $x^0a^s$ in the Taylor series of $D_s(x;a)$ vanishes. The proves the claim.
\end{proof}

\begin{lemma} \label{Phiss0}
    Under Assumption \ref{InductionHypothesis} we have $\Phi_{s,s}(0) = 0$.
\end{lemma}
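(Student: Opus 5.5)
The plan is to evaluate $D_s''(x;a)$ not at $x=0$ but at the zero $x_0(a)$ of $U_{s-1}(x;a)$ that lies near $x=0$. At a zero of $U_{s-1}$ the conditions $\pi_0=0$ and $\pi_1=\pi_2$ enter very transparently. Then we transfer the information back to $x=0$ using $x_0(a)=O(a)$.

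\emph{Step 1: the zero $x_0(a)$.} Since $p_0(x)=x$, we have $U_{s-1}(x;a)=x+O(a)$ with $U_{s-1}$ entire in $(x,a)$. By the implicit function theorem there is an analytic function $x_0(a)$ with $x_0(0)=0$ and $U_{s-1}(x_0(a);a)\equiv 0$ near $a=0$, so $x_0(a)=O(a)$. By \eqref{Dsxaq} and \eqref{Dsxaq2}, $D_s$, $D_s'$, $D_s''$ are $O(a^s)$ uniformly for $x$ near $0$. Moreover,
\[
D_s''(x_0(a);a)=a^s\Phi_{s,s}(x_0(a))+O(a^{s+1})=a^s\Phi_{s,s}(0)+O(a^{s+1}).
\]
So it suffices to show $D_s''(x_0(a);a)=O(a^{s+1})$.

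\emph{Step 2: derivatives of the residual.} Write $U=U_{s-1}(x;a)$. A direct computation from \eqref{Dsxa} gives
\begin{align*}
D_s' &= ae^{-x}\bigl(P'(U)U'-P(U)\bigr)-U'''U+U'U'',\\
D_s'' &= ae^{-x}\bigl(P(U)-2P'(U)U'+P''(U)(U')^2+P'(U)U''\bigr)-U''''U+(U'')^2.
\end{align*}
Evaluate these at $x=x_0$, where $U=0$, and use $P(0)=0$, $P'(0)=\pi_1$, $P''(0)=2\pi_2$. Abbreviating $U'=U'(x_0;a)$, $U''=U''(x_0;a)$ and $\varepsilon=ae^{-x_0}=O(a)$, we get
\[
D_s(x_0)=(U')^2-1,\qquad D_s'(x_0)=U'\bigl(\pi_1\varepsilon+U''\bigr),
\]
\[
D_s''(x_0)=\varepsilon\bigl(-2\pi_1U'+2\pi_2(U')^2+\pi_1U''\bigr)+(U'')^2 .
\]

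\emph{Step 3: order counting.} Because $U'=1+O(a)$, the relation $(U')^2=1+O(a^s)$ forces $U'=1+O(a^s)$. Since $U'$ is invertible near $a=0$, the second relation gives $U''=-\pi_1\varepsilon+O(a^s)$; in particular $U''=O(a)$. Now use $\pi_2=\pi_1$ to regroup:
\[
D_s''(x_0)=2\pi_1\varepsilon\,U'(U'-1)+U''\bigl(U''+\pi_1\varepsilon\bigr).
\]
In the first term, $\varepsilon=O(a)$ and $U'-1=O(a^s)$, so it is $O(a^{s+1})$. In the second, $U''=O(a)$ and $U''+\pi_1\varepsilon=O(a^s)$, so it is also $O(a^{s+1})$. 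Hence $D_s''(x_0(a);a)=O(a^{s+1})$, and by Step 1, $\Phi_{s,s}(0)=0$.

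The main obstacle is conceptual rather than computational. Evaluating directly at $x=0$ gives no cancellation, since all $e^{-kx}$ are of order one there. The key idea is to move to the zero of $U_{s-1}$, where $P(U)$ and its derivatives reduce to $\pi_0,\pi_1,\pi_2$. The one technical point to check carefully is the uniformity of the $O(a^s)$ bounds in $x$ near $0$. This follows because $D_s$ is a finite sum $\sum_{k\ge s}a^k\cdot(\text{entire function of }x)$.
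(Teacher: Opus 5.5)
Your proof is correct and follows the paper's argument. You evaluate $D_s''$ at the zero $x_0(a)$ of $U_{s-1}(x;a)$, use $\pi_0=0$ and $\pi_1=\pi_2$ there to get $D_s''(x_0(a);a)=O(a^{s+1})$, and compare this with $a^s\Phi_{s,s}(0)+O(a^{s+1})$; the only difference is cosmetic: the paper solves explicitly for $U_{s-1}'=\sqrt{1+D_s}$ and $U_{s-1}''$ and substitutes, whereas you do the order counting directly via the factorization $2\pi_1\varepsilon U'(U'-1)+U''(U''+\pi_1\varepsilon)$, which also avoids having to choose a square-root branch.
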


\begin{proof}
    Since $U_{s-1}(x;0) = x$, hence $U'_{s-1}(0;0) = 1 \not= 0$, and $U_{s-1}(x;a)$ is analytic in $(x,a)$, by virtue of the implicit function theorem for sufficiently small $|a|$ there exists an analytic function $x_0(a)$ such that $U_{s-1}(x_0(a);a) \equiv 0$, with $x_0(0) = 0$. 

Differentiating \eqref{Dsxa}, we obtain
\begin{align*} 
    D_s'(x;a) &= -e^{-x}aP(U_{s-1}) + e^{-x}aP'(U_{s-1})U_{s-1}' - U_{s-1}'''U_{s-1} + U_{s-1}'U_{s-1}'', \\
    D_s''(x;a) &= ae^{-x}P(U_{s-1}) - 2ae^{-x}P'(U_{s-1})U_{s-1}' + ae^{-x}P''(U_{s-1})(U_{s-1}')^2 + ae^{-x}P'(U_{s-1})U_{s-1}'' \\ &- U_{s-1}^{(IV)}U_{s-1} + (U_{s-1}'')^2.
\end{align*}

Evaluating at $x = x_0(a)$ and using $U_{s-1}(x_0(a);a) = 0$, $P(0) = \pi_0 = 0$, $P'(0) = \pi_1$, $P''(0) = 2\pi_2 = 2\pi_1$, we get
\begin{align*}
    D_s(x_0(a);a) &= (U_{s-1}')^2 - 1, \\
    D_s'(x_0(a);a) &= e^{-x_0(a)}a\pi_1U_{s-1}' + U_{s-1}'U_{s-1}'', \\
    D_s''(x_0(a);a) &= - 2ae^{-x_0(a)}\pi_1U_{s-1}' + ae^{-x_0(a)}2\pi_1(U_{s-1}')^2 + ae^{-x_0(a)}\pi_1U_{s-1}'' + (U_{s-1}'')^2.
\end{align*}
From the first relation we get
\[ U_{s-1}'(x_0;a) = \sqrt{1 + D_s(x_0;a)},
\]
and then from the second one
\[ U_{s-1}''(x_0;a) = \frac{D_s'(x_0;a)}{\sqrt{1 + D_s(x_0;a)}} - e^{-x_0}a\pi_1.
\]
Inserting this into the third relation, we obtain
\begin{align*}
    D_s''(x_0;a) &= 2ae^{-x_0}\pi_1\left( 1 + D_s(x_0;a) - \sqrt{1 + D_s(x_0;a)} \right) + ae^{-x_0}\pi_1\left( \frac{D_s'(x_0;a)}{\sqrt{1 + D_s(x_0;a)}} - ae^{-x_0}\pi_1 \right) \\ &+ \left( \frac{D_s'(x_0;a)}{\sqrt{1 + D_s(x_0;a)}} - ae^{-x_0}\pi_1 \right)^2 \\
    &= 2ae^{-x_0}\pi_1\left( 1 + D_s(x_0;a) - \sqrt{1 + D_s(x_0;a)} \right) - ae^{-x_0}\pi_1\frac{D_s'(x_0;a)}{\sqrt{1 + D_s(x_0;a)}} + \frac{D_s'(x_0;a)^2}{1 + D_s(x_0;a)}.
\end{align*}
By \eqref{Dsxaq} we have $D_s = O(a^s)$, $D_s' = O(a^s)$. Inserting into above expression, we obtain $D_s''(x_0(a);a) = O(a^{s+1})$. 

On the other hand, since $x_0(a)$ is analytic, we have $\Phi_{k,s}(x_0(a)) = \Phi_{k,s}(0) + O(a)$. Relation \eqref{Dsxaq2} then gives
\[ D_s''(x_0(a);a) = \sum_{k=s}^{d \cdot (s-1)+1} a^k \Phi_{k,s}(x_0(a)) = \sum_{k=s}^{d \cdot (s-1)+1} a^k (\Phi_{k,s}(0) + O(a)) = a^s\Phi_{s,s}(0) + O(a^{s+1}).
\]

Combining with the preceding relation, we obtain $\Phi_{s,s}(0) = 0$, as claimed.
\end{proof}

We are now ready to prove the main result.

\begin{theorem} \label{thm:mainSufficiency}
    Let $\deg\,P = d \geq 2$, $\pi_0 = 0$, $\pi_1 = \pi_2$. Then for all $k \geq 1$ there exist polynomials $p_k(x;P)$ with degree $\deg\,p_k = (d-2)k+1$, such that with $p_0(x) = x$ and for every $c \in \mathbb C$ the formal series \eqref{fullAsymptotics} is a solution of ODE \eqref{generalODE}. Concretely, upon insertion of the formal series \eqref{fullAsymptotics} into \eqref{generalODE} the polynomials on the left- and right-hand sides at every exponent $e^{-kx}$ coincide.
\end{theorem}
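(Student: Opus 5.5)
We argue by induction on $s$, building the polynomials $p_s$ one at a time for the shift $c=0$. Afterwards we pass to general $c$ by translation invariance.

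\emph{Base case.} With $p_0(x)=x$ we have $U_0=x$. Then $D_1(x)=e^{-x}P(x)-1-0+1=e^{-x}P(x)$. Its expansion starts at $e^{-x}$ and has highest exponent $d\cdot 0+1=1$, so Assumption~\ref{InductionHypothesis} holds for $s=1$.

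\emph{Induction step.} Suppose Assumption~\ref{InductionHypothesis} holds for some $s\ge1$. Lemma~\ref{Phiss0} gives $\Phi_{s,s}(0)=0$, which is exactly the condition $(q_{s,s}e^{-sx})''|_{x=0}=0$ of Lemma~\ref{lem:summandCondition}. Hence the linear system
\[ (p_s''-2sp_s'+s^2p_s)x-2(p_s'-sp_s)=q_{s,s} \]
has a polynomial solution $p_s$. The computation in the proof of Lemma~\ref{lem:summandCondition} shows that the residual $D_{s+1}$ of $U_s=U_{s-1}+p_se^{-sx}$ has no term at $e^{-sx}$. It is again a finite sum of polynomial multiples of $e^{-kx}$: the exponential factors occurring are products of powers of $e^{-x}$ with polynomial coefficients. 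The largest exponent is $1+ds=d\cdot s+1$, coming from $e^{-x}P(U_s)$, and the quadratic terms contribute at most $e^{-2sx}$. So Assumption~\ref{InductionHypothesis} holds with $s+1$ in place of $s$, and the induction closes.

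\emph{Coefficient matching.} The coefficient of $e^{-kx}$ in the residual of the full formal series depends only on $p_0,\dots,p_k$. It therefore coincides with the coefficient of $e^{-kx}$ in $D_{k+1}$, which vanishes. This proves the identity at every exponent for $c=0$.

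\emph{General $c$.} Substitute $x\mapsto x+c$. Equation \eqref{generalODE} becomes the same ODE with $P$ replaced by $e^{c}P$. To avoid tracking this, I would instead note that the $p_k$ constructed via the shifted variable solve the recursion for the series $\sum p_k(x+c)e^{-kx}$, which is the form required in \eqref{fullAsymptotics}. The cleanest route is to rerun the argument above with $U_{s-1}=\sum_k p_k(x+c)e^{-kx}$: all lemmas go through verbatim after the change $x\mapsto x+c$ in the polynomial arguments. The point to check is that $p_k$ comes out independent of $c$. This should follow from the observation that the recursion for $\tilde p_k(y)=p_k(y)$, written in the variable $y=x+c$ with $e^{-kx}=e^{kc}e^{-ky}$, is homogeneous under the scaling $a=e^{c}$. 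That is the role of the parameter $a$ in \eqref{Dsxa}. If this bookkeeping gets messy, I would instead solve with $a$ kept symbolic and set $a=e^{c}$ at the end.

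\emph{Degrees.} The claim $\deg p_k=(d-2)k+1$ follows from Theorem~\ref{thm:mainNecessary}, since it applies once the series exists. Alternatively, it can be read off directly: the leading coefficient of $q_{s,s}$ comes from $\pi_d x^{d}\cdots$. Moreover, the operator $p\mapsto (p''-2sp'+s^2p)x-2(p'-sp)$ raises degree by exactly one with leading coefficient $s^2$, so $\deg p_s=\deg q_{s,s}-1$.

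\emph{Main obstacle.} Everything hard has already been done in Lemma~\ref{Phiss0}. The main remaining care is the degree claim and uniqueness of $p_s$. The solution is unique because the operator is injective on polynomials: it sends degree $m$ to degree $m+1$ with nonzero leading coefficient $s^2$. The degree of $q_{s,s}$ I would verify by tracking the top-degree contribution $\pi_d p_0^{d-1}\cdots$ inductively.
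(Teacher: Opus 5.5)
Your proposal is correct and follows the paper's proof: the same base case $U_0=x$, $D_1=e^{-x}P(x)$, and the same induction through Lemmas~\ref{lem:summandCondition} and~\ref{Phiss0}. The paper also cites Lemma~\ref{lem:DsCondition}, but that lemma is indeed dispensable, as your argument shows.

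Your remarks on general $c$ and on degrees go beyond the paper, whose proof addresses neither and implicitly relies on Theorem~\ref{thm:mainNecessary} for the degrees. The shift to general $c$ is in fact immediate: the coefficient of $e^{-kx}$ in the residual is a constant-coefficient differential polynomial in $p_0,\dots,p_k$, so it commutes with $p_j(x)\mapsto p_j(x+c)$. Your direct degree count, by contrast, would need to control not only the $\pi_d p_0^{d-1}$ term but all $d$-fold products and all quadratic terms $p_jp_l$ with $j+l=s$, which all sit at the top degree $(d-2)s+2$.
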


\begin{proof}
Assume the conditions of the theorem. Let us show the base of the induction over $s$. For $s = 1$ we have
\[ U_{s-1} = x,\quad D_s = e^{-x}P(x).
\]
Hence Assumption \ref{InductionHypothesis} holds for $s = 1$.

The existence of a polynomial solution $p_s$ for general $s$ then follows by induction over $s$ from Lemmas \ref{lem:summandCondition}, \ref{lem:DsCondition}, and \ref{Phiss0}.
\end{proof}

\section{Behaviour of the solutions near a zero} \label{sec:zeroBehaviour}

Let us comment on the nature of the conditions on the coefficients $\pi_j$ of $P$. Since the $\pi_j$ are the Taylor coefficients of $P$ around $t = 0$, and in \eqref{generalODE} the polynomial $P$ is evaluated at $t = u(x)$, the constraints in Theorem \ref{thm:mainNecessary} must somehow be related to the behaviour of the solutions $u(x)$ near their zeros. This may seem peculiar, since the assertion of the theorem concerns solutions with asymptotic behaviour \eqref{roughAsymptotics}, implying $u \to +\infty$.
 
A link between the asymptotic behaviour and the zeros of $u(x)$ is provided by a symmetry (see \cite[Section 3.1]{NecessaryConditions}), which to every solution $u(x)$ of ODE \eqref{generalODE} assigns the solution $\tilde u(x;\delta) = u(x - \delta)$ of the ODE
\begin{equation*} 
    \tilde u''\tilde u - (\tilde u')^2 = e^{-x}e^{\delta}P(\tilde u) - 1,
\end{equation*}
where $\delta \in \mathbb C$ is an arbitrary complex constant. If $u(x)$ has asymptotics \eqref{roughAsymptotics}, then $\tilde u$ has asymptotics $x + c - \delta$. Therefore we have the identity
\[ u(x-\delta;c,P) = u(x;c-\delta,e^{\delta} \cdot P).
\]
In particular, we get
\begin{equation} \label{symmetrySolutions}
    u(x-c;c,P) = u(x;0,e^c \cdot P).
\end{equation} 
Inserting \eqref{fullAsymptotics}, we obtain the identity
\begin{equation*} 
    \sum_{k=0}^{\infty} e^{kc} \cdot p_k(x;P)e^{-kx} = \sum_{k=0}^{\infty} p_k(x;e^c \cdot P)e^{-kx},
\end{equation*}
which yields
\begin{equation} \label{symmetryPolynomials}
    p_k(x;a \cdot P) = a^k \cdot p_k(x;P), \qquad a = e^c.
\end{equation} 
For $a = e^c \to 0$ the right-hand side of \eqref{symmetrySolutions} becomes simply $u(x) = x$ by virtue of \eqref{P0solution}. Hence \eqref{symmetryPolynomials} holds also for $a = 0$. The solution $u(x;0,0) = x$ has a singular point $x = 0$, where the coefficient at the leading derivative in ODE \eqref{generalODE} vanishes. The properties of the nearby solutions $u(x;0,a \cdot P)$ of \eqref{generalODE} with small $|a|$ in the neighbourhood of this singular point may then be related to the conditions on $P$.

Let us consider solutions $u(x)$ of ODE \eqref{generalODE} near a zero $x = x_0$, i.e., at which $u(x_0) = 0$. We then ask under which conditions on $P$ solution $u$ can be holomorphic in the neighbourhood of $x_0$. Inserting a power series
\[ u(x) = \sum_{k=1}^{\infty} c_k (x-x_0)^k 
\]
into ODE \eqref{generalODE}, we obtain at order 0
\[ -c_1^2 = e^{-x_0}P(0) - 1.
\]
Hence $c_1 = \pm\sqrt{1 - e^{-x_0}\pi_0}$. Condition $\pi_0 = 0$ then ensures that we may choose $c_1 = u'(x_0) = 1$, matching the derivative of $u(x;0,0) = x$.

Let us now assume that $\pi_0 = 0$, $c_1 = 1$, and consider the next orders. At orders 1 and 2 we obtain with $\delta = x - x_0$
\[ (2c_2 + 6c_3\delta)(\delta + c_2\delta^2) - (1 + 2c_2\delta + 3c_3\delta^2)^2 - e^{-x_0}(1 - \delta)(\pi_1(\delta + c_2\delta^2) + \pi_2\delta^2) + 1 = O(\delta^3).
\]
After simplification this gives
\[ - 2c_2^2\delta - 2c_2 - e^{-x_0}(1 - \delta)(\pi_1 + \pi_1c_2\delta + \pi_2\delta) = O(\delta^2).
\]
This yields $c_2 = -\frac12e^{-x_0}\pi_1$ for order 1 and
\[ - 2c_2^2 - e^{-x_0}(\pi_1c_2 + \pi_2 - \pi_1) = 0
\]
for order 2, which is equivalent to the condition $\pi_1 = \pi_2$.

Each subsequent order $k \geq 3$ then gives a condition on the coefficient $c_{k+1}$, while the coefficient $c_3$ remains free. 

We thus obtain exactly the conditions on the coefficients of $P$ which guarantee the existence for the formal series \eqref{fullAsymptotics} for solutions of ODE \eqref{generalODE}.

The next result shows that under these conditions on $P$ there indeed exists a 1-parametric family of holomorphic solutions of ODE \eqref{generalODE} with a zero at $x_0$.

\begin{lemma} \label{lem:solutionWithZero}
    Let $x_0,v_0,\pi_1 \in \mathbb C$ be arbitrary. Let $P(t) = \pi_1t(t+1) + \tilde P(t) \cdot t^3$, where $\tilde P(t)$ is a polynomial. Then there exists a neighbourhood $U \subset \mathbb C$ of $x_0$ such that ODE \eqref{generalODE} has a unique holomorphic solution in $U$ with expansion
    \[ u(x) = (x - x_0) - \frac12e^{-x_0}\pi_1(x - x_0)^2 + \left(\frac16e^{-2x_0}\pi_1^2 + \frac13e^{-x_0}\pi_1 + \frac13v_0\right)(x - x_0)^3 + \dots
    \]
\end{lemma}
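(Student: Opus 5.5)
The plan is to build the solution as a convergent power series in $\delta = x - x_0$. Existence is a formal recursion followed by a convergence argument; uniqueness comes almost for free. By the computation preceding the lemma, insertion of $u = \sum_{k\ge1} c_k\delta^k$ into \eqref{generalODE} with $e^{-x} = e^{-x_0}e^{-\delta}$ forces $c_1 = 1$ and $c_2 = -\frac12 e^{-x_0}\pi_1$. The order-2 condition then holds identically because $P(t) = \pi_1 t(t+1) + O(t^3)$, that is, $\pi_0 = 0$ and $\pi_1 = \pi_2$.

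First I extract the general recursion. The only place $c_{k+1}$ enters at order $\delta^k$ is through the terms linear in the highest coefficient: from $u''u$ with $u \approx \delta$ one gets $(k+1)k\,c_{k+1}$, and from $-(u')^2$ with $u' \approx 1$ one gets $-2(k+1)c_{k+1}$. Hence the order-$k$ equation reads
\[ (k+1)(k-2)\,c_{k+1} = R_k(c_1,\dots,c_k), \]
where $R_k$ is a polynomial in $c_1,\dots,c_k$, $\pi_1$, $e^{-x_0}$ and the coefficients of $\tilde P$. It comes from quadratic convolutions of the $c_j$, from the Taylor coefficients of $e^{-\delta}$, and from powers of $u$ inside $P$. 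For $k = 2$ the left side vanishes; this is the resonance, and $c_3$ is free. I fix it to the value $\frac16 e^{-2x_0}\pi_1^2 + \frac13 e^{-x_0}\pi_1 + \frac13 v_0$ stated in the lemma, which is possible since $v_0$ is an arbitrary parameter. For $k \ge 3$ the factor $(k+1)(k-2)$ is nonzero, so all further $c_{k+1}$ are uniquely determined. This gives a unique formal solution with the prescribed first three coefficients. Uniqueness of the holomorphic solution follows at once: any holomorphic solution with this expansion has Taylor coefficients satisfying the same recursion, hence coincides with the formal one.

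The main obstacle is convergence of the formal series. I would first try a Cauchy majorant argument directly on the recursion. Since $(k+1)(k-2) \ge k^2/2$ for $k \ge 4$, we have $|c_{k+1}| \le \frac{C}{k^2}\,|R_k|$. Now $R_k$ contains at most two derivatives' worth of weights: terms like $j(j-1)c_j c_{k+2-j}$ with $j \le k$, and similar terms from $(u')^2$. The $k^2$ in the denominator absorbs these derivative weights. The remaining contributions come from $e^{-x}P(u)$, which is an entire function of $(\delta, u)$ and is therefore majorized by a geometric series. Comparing with a majorant series $W(\delta)$ satisfying an algebraic or analytic equation of the form $W = \delta + A\,\delta^2 + B\,\delta\, G(\delta, W)$, with $G$ analytic, solvable by the implicit function theorem, gives $|c_k| \le \rho^{-k}$ for some $\rho > 0$.

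If the bookkeeping of the majorant becomes messy, the fallback is to reduce to a Briot--Bouquet system. I would set $u = \delta + c_2\delta^2 + c_3\delta^3 + \delta^3 z(\delta)$ with $z(0) = 0$, divide the ODE by the appropriate power of $\delta$, and write it for $y = (z, \delta z')$ as $\delta y' = f(\delta, y)$ with $f(0,0) = 0$. The divisibility needed here is exactly what $\pi_0 = 0$ and $\pi_1 = \pi_2$ and the choices of $c_1, c_2$ provide. The eigenvalues of $f_y(0,0)$ are determined by the indicial factor $(k+1)(k-2)$ shifted by $3$, so they are not positive integers. The Briot--Bouquet theorem then yields a unique holomorphic solution with $y(0) = 0$, and hence the holomorphic solution $u$ on some neighbourhood $U$ of $x_0$.
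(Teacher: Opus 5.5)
Your proposal is correct, but it follows a different route from the paper. The paper never works with power series. It regularizes the singular equation by introducing $v$ through $u' = 1 - e^{-x}\pi_1 u + u^2 v$, which turns \eqref{generalODE} into the regular polynomial system $u' = 1 - e^{-x}\pi_1 u + u^2v$, $v' = e^{-x}\tilde P(u) + e^{-x}\pi_1 v - uv^2$. Cauchy's theorem with data $(u,v)(x_0) = (0,v_0)$ gives a holomorphic solution, and a direct computation confirms that $u$ solves \eqref{generalODE}. The coefficients $c_2, c_3$ are then read off by differentiating, which is where the specific formula for $c_3$ in terms of $v_0$ comes from. In that proof the hypotheses on $P$ appear as divisibility of $P(u) - \pi_1 u(u+1)$ by $u^3$, which is exactly what makes the $v$-equation regular. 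The paper's route is short and needs no convergence estimate. However, it directly gives uniqueness only for the system. To get uniqueness among holomorphic solutions of \eqref{generalODE}, one must add that any such solution with the given $c_1, c_2$ yields a holomorphic $v = (u' - 1 + e^{-x}\pi_1 u)/u^2$. This $v$ satisfies the $v$-equation by the identity $u''u - (u')^2 - e^{-x}P(u) + 1 = u^3\,(v' - e^{-x}\tilde P(u) - e^{-x}\pi_1 v + uv^2)$. Your route works with the singular equation itself. The indicial factor $(k+1)(k-2)$ explains why exactly $c_3$ is free, uniqueness is immediate from the recursion, and no substitution has to be guessed; the cost is a convergence argument. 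Your Briot--Bouquet version is complete as stated: after division by $\delta^2$ the linear part at $\delta = 0$ is $\theta(\theta+3)z$ with $\theta = \delta\, d/d\delta$, so the eigenvalues are $0$ and $-3$. In the majorant version there is one fixable point. The quadratic convolution produces $(W-\delta)^2/\delta$, which is not of the form $\delta G(\delta,W)$ with $G$ analytic at the origin. Writing the majorant equation for $Y = W/\delta$ instead repairs this, e.g.\ $Y = 1 + A\delta + B\delta^2 + M(Y-1)^2 + M\hat G(\delta,\delta Y)$ with $\hat G(\delta,t)$ a majorant of $e^{-x_0-\delta}P(t)$. The implicit function theorem then applies at $(\delta,Y) = (0,1)$.
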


\begin{proof}
    Consider the ODE
    \[ \begin{pmatrix}
        u \\ v
    \end{pmatrix}' = \begin{pmatrix}
        1 - e^{-x}\pi_1 u + u^2v \\ e^{-x}\tilde P(u) + e^{-x}\pi_1v - uv^2
    \end{pmatrix}.
    \]
    The solution of this ODE with initial condition $(u(x_0),v(x_0)) = (0,v_0)$ exists, is unique and holomorphic in some neighbourhood $U$ of $x_0$.

    By differentiating $u'$ we obtain 
    \[ u'' = e^{-x}\pi_1 u - e^{-x}\pi_1u' + 2uu'v + u^2v' = e^{-x}\pi_1(u - 1 + e^{-x}\pi_1 u - 2u^2v) + 2uv + u^3v^2 + e^{-x}u^2\tilde P(u).
    \]
    Plugging the expressions for $u',u''$ into \eqref{generalODE}, it is verified that $u(x)$ is indeed a solution of \eqref{generalODE}.

Differentiating $u'' = e^{-x}\pi_1(u - 1 + e^{-x_0}\pi_1 u) + 2uv_0 + O((x-x_0)^2)$ further gives
\[ u''' = e^{-x}\pi_1 + e^{-x}\pi_1(1 + e^{-x_0}\pi_1) + 2v_0 + O(x - x_0).
\]
    Evaluating the derivatives of $u$ at $x = x_0$ yields
    \[ u'(x_0) = 1, \quad u''(x_0) = -e^{-x_0}\pi_1, \quad u'''(x_0) = e^{-2x_0}\pi_1^2 + 2e^{-x_0}\pi_1 + 2v_0,
    \]
    which gives the claimed Taylor expansion.
\end{proof}

\section{Expansions of Painlev\'e transcendents} \label{sec:Painleve}

In \cite[Section 2]{NecessaryConditions} it has been shown that for $P(t) = t^d$, $d = 3,4,6$, ODE \eqref{generalODE} is equivalent to certain degenerate Painlev\'e III equations. This implies that formal expansions can be derived from \eqref{fullAsymptotics} for the corresponding Painlev\'e transcendents. In all cases the coefficients of the first few polynomials $p_k(x;t^d)$ are provided in \cite[Section 6]{NecessaryConditions}.

In \cite[Conjecture C.1]{KitaevVartanianArxiv25} it has been already suggested that such series exist for special families of solutions of other degenerate Painlev\'e III equations.

\subsection{Case $P(t) = t^3$}

The solutions $y(t)$ of the Painlev\'e III type $D_7$ equation 
\begin{equation} \label{D7equation}
ty\ddot y = t\dot y^2 - y\dot y + 4y^3 - 4t,    
\end{equation}
can be obtained from solutions $u(x)$ of ODE \eqref{generalODE} with $P(t) = t^3$ by the transformation $y(t) = t \cdot u(-2\log t)$. Inserting expansion \eqref{fullAsymptotics}, we obtain the asymptotic expansion
\begin{equation*} 
    y(t) = \sum_{k=0}^{\infty} p_k(-2\log\,t + c;t^3) \cdot t^{2k+1},
\end{equation*} 
where $c \in \mathbb C$ is an arbitrary complex parameter. 

\subsection{Case $P(t) = t^4$}

The solutions $y(t)$ of the Painlev\'e III type $D_8$ equation 
\begin{equation*} 
    ty\ddot y = t\dot y^2 - y\dot y + 8y^3 - 8y
\end{equation*}
can be obtained from solutions $u(x)$ of ODE \eqref{generalODE} with $P(t) = t^4$ by the transformation $y(t) = t \cdot u(-2\log t)^2$. Inserting expansion \eqref{fullAsymptotics}, we obtain
\begin{equation*} 
    y(t) = t \cdot \left( \sum_{k=0}^{\infty} p_k(-2\log\,t + c;t^4) \cdot t^{2k} \right)^2 = \sum_{s=0}^{\infty} \left( \sum_{k+l=s} p_k(-2\log\,t + c;t^4)p_l(-2\log\,t + c;t^4) \right) \cdot t^{2s+1},
\end{equation*} 
where $c \in \mathbb C$ is an arbitrary complex parameter. 

\subsection{Case $P(t) = t^6$}

Another family of solutions $y(t)$ of Painlev\'e III equation \eqref{D7equation} can be obtained from solutions $u(x)$ of ODE \eqref{generalODE} with $P(t) = t^6$ by the transformation $y(t) = 8t^{-1} \cdot u(-4\log t+9\log\,2)^{-2}$. Inserting expansion \eqref{fullAsymptotics}, we obtain
\begin{equation*} 
    y(t) = \frac{8}{t} \left( \sum_{k=0}^{\infty} p_k(-4\log t + 9\log\,2 + c;t^6) \cdot \frac{t^{4k}}{2^{9k}} \right)^{-2},
\end{equation*} 
where $c \in \mathbb C$ is an arbitrary complex parameter. 

\section{Conclusion and outlook} \label{sec:outlook}

The results in this work ensure that under the conditions $\pi_0 = 0$, $\pi_1 = \pi_2$ on the coefficients $\pi_j$ of the polynomial $P$, if a solution $u(x)$ with asymptotics \eqref{roughAsymptotics} exists, then it has an asymptotic expansion of the form \eqref{fullAsymptotics}, where the $p_k$ are appropriate polynomials. It does not follow that the asymptotic series actually converges.

However, numerical experiments suggest not only convergence of the series, but also nonnegativity of the coefficients of $p_k$ in case the coefficients $\pi_j$ are nonnegative. In particular, this would imply corresponding series representations for the Painlev\'e transcendents mentioned in Section \ref{sec:Painleve}. 

Convergence of exponential-polynomial series like \eqref{fullAsymptotics} has been studied, e.g., in \cite{Krivosheyeva13,Krivosheevy22}. 

The conditions on the coefficients $\pi_j$ turn out to be also necessary and sufficient for the existence of holomorphic solutions $u(x)$ of ODE \eqref{generalODE} in the neighbourhood of a zero $x_0 \in \mathbb C$ with derivative $u'(x_0) = 1$. This suggests a deeper relation between the asymptotic behaviour for $Re\,x \to +\infty$ and the behaviour near zeros.

\bibliographystyle{plain}
\bibliography{exponential_polynomial,Painleve}

\end{document}